\newcommand{\RR}{\mathbb R}
\newcommand{\CC}{\mathbb C}
\newtheorem{defi}[theorem]{Definition}
\newtheorem{cor}[theorem]{Corollary}
\newtheorem{exm}[theorem]{Example}
\newtheorem{rem}[theorem]{Remark}
\begin{document}

\title{Inexact Arnoldi residual estimates and decay properties for functions of non-Hermitian matrices\thanks{This work has been supported by 
		      the FARB12SIMO grant, Universit\`a di Bologna, by INdAM-GNCS under the 2016 Project \emph{Equazioni e funzioni di matrici con struttura: analisi e algoritmi},
		      by the INdAM-GNCS ``Giovani ricercatori 2016'' grant, 
		      and by Charles University Research program No. UNCE/SCI/023.
}}


\author{Stefano Pozza         \and
        Valeria Simoncini 
}


\institute{S. Pozza \at
              Faculty of Mathematics and Physics, Charles University, 
			     Sokolovsk\'a 83, 186 75 \mbox{Praha 8}, Czech Republic,
			     and associated to ISTI-CNR, Pisa, Italy. \\
              Tel.: +420-951553365\\
              \email{pozza@karlin.mff.cuni.cz}           
           \and
           V. Simoncini \at
              Dipartimento di Matematica, Universit\`a di Bologna,
			     Piazza di Porta San Donato  5, I-40127 Bologna, Italy, 
			     and IMATI-CNR, Pavia. \\
			     \email{valeria.simoncini@unibo.it}   
}

\date{Submitted: \today}

\maketitle

\begin{abstract}
We derive a priori residual-type bounds for the Arnoldi approximation of a matrix function
  and a strategy for setting the iteration accuracies in the inexact Arnoldi approximation of matrix functions.
  Such results are based on the decay behavior of the entries of functions of banded matrices.
  Specifically, we will use a priori decay bounds for the entries of functions of banded non-Hermitian matrices by using Faber polynomial series.
  Numerical experiments illustrate the quality of the results.
\keywords{Arnoldi algorithm \and Inexact Arnoldi algorithm \and Matrix functions \and Faber polynomials \and Decay bounds \and Banded matrices}
\subclass{65F60 \and 65F10}
\end{abstract}

\section{Introduction}
Matrix functions have arisen as a reliable and a computationally attractive tool for solving a large variety
of application problems; we refer the reader to \cite{higham} for a thorough discussion and references.
Given a  complex $n \times n$ matrix $A$ and a sufficiently regular function $f$,
we are interested in the approximation of the matrix function $f(A)$. 
More precisely, assuming $n$ large and $\mathbf v$ a unit norm vector, we want to approximate $f(A)\mathbf{v}$.
In this case, we can consider the orthogonal projection onto a subspace $\mathcal{V}_m$ of dimension $m$, obtaining the approximation
\begin{equation}\label{eq:arnoldi:approx}
 f(A)\,\mathbf{v} \approx V_m f(H_m) \, \mathbf{w},
\end{equation}
with $m$ much smaller than $n$, $V_m$ an $n \times m$ matrix whose columns are an orthogonal basis of $\mathcal{V}_m$, 
$H_m = V_m^* A V_m$, and $\mathbf{w} = V_m^* \mathbf{v}$.
In this paper, we will focus on the case in which $\mathcal{V}_m$ is the \emph{Krylov subspace}
 $$ \mathcal{K}_m(A,\mathbf{v}) = \textrm{span}\{\mathbf{v},A\mathbf{v},\dots,A^{m-1}\mathbf{v}\}$$
 and $V_m$ is the orthogonal basis obtained by the Arnoldi algorithm; see, e.g., \cite[chapter 13]{higham}. 
  Notice that the case of Arnoldi approximation for the matrix exponential has been especially considered. 
  Estimates of the error norm $\| e^{-tA} \mathbf{v} - V_m e^{-tH_m} \mathbf{e}_1 \|$
for $A$ non-normal have been given for instance
by Saad \cite{saad_1992}, by Lubich and Hochbruck in \cite{hochbruck_lubich},
  and recently by Wang and Ye in \cite{wang_ye_preprint} and \cite{wang_thesis}. 
  Other methods related to Arnoldi approximation can be found in \cite{AfaEieErnGut08,EieErnGut11,FroGutSch14_conv,FroGutSch14}
  where \emph{restarted} techniques are considered.
  Regarding rational Krylov approximations of matrix functions we refer the reader to the review \cite{guttel_2013}
  and to the black-box rational Arnoldi variant given in \cite{GutKni13}.

 When $V_n$ is the output of the Arnoldi algorithm, 
 $H_m$ is an upper Hessenberg matrix.
 Therefore the elements of $f(H_m)$ are usually characterized by a decay behavior.
 Indeed, given a square banded matrix $B$, the entries of the matrix function $f(B)$ 
 for a sufficiently regular function $f$ are characterized by a - typically exponential - decay pattern as they move away from the main diagonal. 
 This phenomenon has been known for a long time, and it is at the basis of approximations and estimation strategies in many fields, 
 from signal processing to quantum dynamics and multivariate statistics;
 see, e.g., \cite{benzi_boito_2014,benzi_boito_razouk_2013,benzi_simoncini} and their references.
 The interest in {\it a priori} estimates that can accurately predict the decay rate of matrix functions has significantly grown in the past decades, 
 and it has mainly focused on Hermitian matrices 
 \cite{demko_1977,eijkhout_polman_1988,meurant_review_1992,benzi_golub_1999,ye_2013,benzi_simoncini,delbuono_lopez_peluso_2005,canuto_simoncini_verani_2014};
the inverse and exponential functions have been given particular attention, due to their relevance in numerical analysis and other fields. 
Upper bounds usually take the form
\begin{eqnarray}\label{eqn:main}
|(f(B))_{k,\ell}| \le c \rho^{|k-\ell|},
\end{eqnarray}
where $\rho \in (0,1)$; both $\rho$ and $c$ depend on the spectral properties of $B$ and on the domain of $f$, 
while $\rho$ also strongly depends on the bandwidth of $B$.

In the case of a banded Hermitian matrix, 
bounds of the Arnoldi approximation have been used to obtain upper estimates for the entries decay of a related matrix function; 
see for instance \cite{benzi_simoncini} for the exponential function.
  Here we will exploit this connection but in the reverse direction. 
  More precisely, we will first derive decay bounds for the entries of banded non-Hermitian matrices.
  Then we will apply such bounds to the matrix function $f(H_m)$, with $H_m$ the upper Hessenberg matrix given by Arnoldi algorithm, 
  obtaining a priori bounds for a specifically defined residual associated with the approximation \eqref{eq:arnoldi:approx}; 
  these bounds complement those available in the already mentioned literature for the Arnoldi approximation.
  Furthermore, we will use the described bounds in the inexact Krylov approximation of matrix function; 
  in particular, the bounds can be used to devise a priori relaxing thresholds for the inexact matrix-vector multiplications with $A$, 
  whenever $A$ is not available explicitly. 
  These last results generalize the theory developed for $f(z)=z^{-1}$ and for the eigenvalue problem in \cite{Simoncini.Szyld.03} and \cite{Simoncini.05};
  see also \cite{DinSid17,KurFre18}.

The analysis of the decay pattern for banded {\it non-Hermitian} matrices is significantly harder compared to the Hermitian case, 
especially for non-normal matrices. 
In \cite{benzi_razouk} Benzi and Razouk addressed this challenging case for diagonalizable matrices. 
They developed a bound of the type (\ref{eqn:main}), where $c$ also contains the eigenvector matrix condition number.
In \cite{mastronardi_etall_2010} the authors derive several qualitative bounds, 
mostly under the assumption that $A$ is diagonally dominant. 
The exponential function provides a special setting, which has been explored in \cite{iserles_2000} 
and in \cite{wang_thesis,wang_ye_preprint}. 
In all these last articles, and also in our approach, bounds on the decay pattern
of banded non-Hermitian matrices are derived that avoid the explicit reference to the possibly large condition number of the eigenvector matrix. 
Specialized off-diagonal decay results have been obtained for certain normal matrices, see, e.g., \cite{Freund1989a,delbuono_lopez_peluso_2005,FroSchSch18}, 
and for analytic functions of banded matrices over $C^*$-algebras \cite{benzi_boito_2014}.

Starting with the pioneering work {\cite{demko_moss_smith}}, 
most estimates for the decay behavior of the entries have relied on Chebyshev and Faber polynomials as technical tool, mainly for two reasons. 
Firstly, polynomials of banded matrices are again banded matrices, although the bandwidth increases with the polynomial degree. 
Secondly, sufficiently regular matrix functions can be written in terms of Chebyshev and Faber series, 
whose polynomial truncations enjoy nice approximation properties for a large class of matrices, 
from which an accurate description of the matrix function entries can be deduced.
Using Faber polynomials we will present an original derivation of a family of bounds for function of banded non-Hermitian matrices.
Such family can be adapted to several cases, depending on function properties and matrix spectral properties.
Very similar bounds can be obtained combining Theorem 10 in \cite{benzi_boito_2014} with Theorem 3.7 in \cite{benzi_razouk}.
Another similar bound is given in \cite[Theorem 2.6]{mastronardi_etall_2010} for the case of multi-banded matrices
and in \cite[Theorem 3.8]{wang_thesis} for the exponential case.
See also \cite{PozTud18} where the bound we present here have been extended to matrices with a more general sparsity pattern.
The bound we will present and the ones just cited above are based on the approximation of the field of value (numerical range) of a matrix,
which is in general expensive to compute.
Nevertheless, it is not necessary to have a precise approximation of the field of value in order to use such bounds.
Moreover, in several cases an approximation of the field of value can be obtained more easily, see, e.g., \cite{eiermann93} (in particular section 3 for for Toeplitz matrices), 
and \cite[Section 5.3]{PozTud18} for the adjacency matrix of a network.

 The paper is organized as follows.
In section \ref{sec-faber-poly}
we use Faber polynomials to give a bound that can be adapted 
to approximate the entries of several functions of banded matrices; as an example we consider
the functions $e^A$ and $e^{-\sqrt{A}}$.
In section~\ref{subsec-arnoldi} we first show that the derived bounds
can be used for a residual-type bound  in the approximation of $f(A)\mathbf{v}$,
for certain functions $f$  by means of the Arnoldi algorithm. Then 
we describe how to employ this bound to reliably estimate the quality of the approximation
when in the Arnoldi iteration the accuracy in the matrix-vector product is relaxed.
Numerical experiments illustrate the quality of the bounds.
We conclude with some remarks in section \ref{sec-conclusion}
and with technical proofs in the appendix.

 All our numerical experiments were performed using Matlab (R2013b) \cite{matlab7}.
  In all our experiments, the computation of the field of values 
  employed the code in \cite{cowen_harel_1995}.

\section{Decay bounds for functions of banded matrices}\label{sec-faber-poly}  
 We begin recalling the definition of matrix function and some of its properties.
   Matrix functions can be defined in several ways (see \cite[section 1]{higham}).   
 For our presentation it is helpful to introduce the definition  that employs the Cauchy integral formula.
   \begin{defi}\label{def-matrix-function}
Let $A\in\CC^{n\times n}$ and $f$ be an analytic function on some open $\Omega \subset \mathbb{C}$.
     Then
$$ 
f(A) = \int_\Gamma f(z) \left (zI - A  \right )^{-1} \textrm{d}z, 
$$
      with  $\Gamma \subset \Omega$ a system of Jordan curves encircling each eigenvalue of $A$
      exactly once, with mathematical positive orientation.
   \end{defi}
    
   When $f$ is analytic Definition \ref{def-matrix-function} is equivalent 
   to other common definitions; see \cite[section 2.3]{rinehart}.
   
For $\mathbf{v} \in \mathbb{C}^{n}$ we denote with $|| \mathbf{v} ||$ the Euclidean vector norm, 
and for any matrix $A \in \mathbb{C}^{n \times n}$, with $|| A ||$ the induced matrix norm, that is
 $||A||= \sup_{||\mathbf{v}||=1} ||A \mathbf{v}||$. $\CC^+$ denotes the open right-half complex plane.
Moreover, we recall that the \emph{field of values} (or \emph{numerical range}) of $A$ is defined as
the set $W(A) = \{ \mathbf{v}^* A \mathbf{v} \, | \, \mathbf{v}\in\mathbb{C}^{n},  ||\mathbf{v}|| = 1 \}$,
where ${\mathbf v}^*$ is the conjugate transpose of ${\mathbf v}$.
We remark that the field of values of a matrix is a bounded convex subset of $\mathbb{C}$.

The $(k,\ell)$ element of a matrix $A$ will be denoted by $(A)_{k,\ell}$.
The set of banded matrices is defined as follows.
 \begin{defi}
   The notation $\mathcal{B}_n(\beta,\gamma)$ defines the set of banded matrices $A \in \mathbb{C}^{n \times n}$
   with upper bandwidth $\beta \geq 0$ and lower bandwidth $\gamma \geq 0$, i.e.,
   $(A)_{k,\ell} = 0$ for $\ell - k > \beta$ or $k - \ell > \gamma$.
 \end{defi}
 
 We observe that if $A \in {\mathcal B}_n(\beta,\gamma)$ with $\beta, \gamma \ne 0$, for
 \begin{equation}\label{eq-xi-def}
      \xi := \left \{ \begin{array}{lc}
		      \lceil (\ell - k) / \beta \rceil, & \textrm{ if } k < \ell \\
		      \lceil (k - \ell) / \gamma \rceil, & \textrm{ if } k \ge \ell 
                  \end{array} \right. 
 \end{equation}
it holds that
 \begin{equation}\label{eq-power-A-banded}
 (A^m)_{k,\ell} = 0, \quad \textrm{ for every } m < \xi.
\end{equation}
This characterization of banded matrices is a classical fundamental tool to prove the decay property
of matrix functions, as sufficiently regular functions can be expanded in power series.
Since we are interested in nontrivial banded matrices, in the following we shall
assume that both $\beta$ and $\gamma$ are nonzero.

Faber polynomials extend the theory of power series to sets different from the disk,
and can be effectively used to bound the entries of matrix functions. 
Let $E$ be a continuum (i.e., a non-empty, compact and connected subset of $\mathbb{C}$)
 with connected complement, then by Riemann's mapping theorem there exists
 a function $\phi$ that maps the exterior of $E$ conformally onto the exterior
 of the unitary disk $\{|z|\leq 1\}$ and so that
$$ 
\phi(\infty) = \infty, \quad \lim_{z \rightarrow \infty} \frac{\phi(z)}{z} =  d > 0.
$$
Hence, $\phi$ can be expressed by a Laurent expansion
$\phi(z) = dz + a_0 + \frac{a_1}{z} + \frac{a_2}{z^2} + \cdots$. 
Furthermore, for every $n >0$ we have
$$
 \left ( \phi(z) \right)^n = 
    d z^n + a_{n-1}^{(n)} z^{n-1} + \dots + a_{0}^{(n)} + \frac{a_{-1}^{(n)}}{z} + \frac{a_{-2}^{(n)}}{z^2} + \cdots .
$$
Then, the Faber polynomial for the domain $E$ is defined by
(see, e.g., \cite{suetin})
$$ 
\Phi_n(z) = d z^n + a_{n-1}^{(n)} z^{n-1} + \dots + a_{0}^{(n)}, \quad \textrm{ for } n \geq 0. 
$$
If $f$ is analytic on $E$ then it can be expanded in a series
of Faber polynomials for $E$, that is
$$
f(z) = \sum_{j = 0}^\infty f_j \Phi_j(z), \quad \textrm{ for } z \in E;
$$
\cite[Theorem 2, p.~52]{suetin}.
If the spectrum of $A$ is contained in $E$ and $f$ is a function analytic on $E$, then
the matrix function $f(A)$ can be expanded as follows
(see, e.g., \cite[p.~272]{suetin})
\begin{equation*}
f(A) = \sum_{j = 0}^\infty f_j \Phi_j(A).
\end{equation*}
If, in addition, $E$ contains the field of values $W(A)$, 
then for $n\geq1$ we get
\begin{equation}\label{eq:beck:ineq}
   \| \Phi_n(A) \| \leq 2,
\end{equation}
by Beckermann's Theorem 1.1 in \cite{beckermann_2005}.

By using the properties of Faber polynomials, 
in the following theorem we will derive decay bounds for a large class of matrix functions.
Notice that the estimate in \cite[Theorem 10]{benzi_boito_2014} 
combined with the results presented in \cite[Theorem 3.7]{benzi_razouk} results in similar bounds
(see also \cite{ellacott_1983});
moreover, in section 2 of \cite{mastronardi_etall_2010}, and in particular in Theorem 2.6, analogous results are discussed.
Another similar bound can be found in \cite[Theorem 3.8]{wang_thesis} for the exponential case.
The derivation we will describe differs from the ones listed above by using inequality \eqref{eq:beck:ineq}.

\begin{theorem}\label{thm-decay-level-set}
   Let $A \in \mathcal{B}_n(\beta,\gamma)$ with field of values contained in a convex continuum $E$. 
   Moreover, let $\phi$ be the conformal map sending the exterior of $E$ onto the exterior of the unitary disk,
   and let $\psi$ be its inverse.
   For any $\tau > 1$ so that $f$ is analytic on the level set $G_\tau$ 
   defined as the complement of the set $\{ \psi(z) \,\, : \,\, |z| > \tau\}$, we get
   $$ 
      \left| \left ( f(A) \right )_{k,\ell} \right|	\leq  
      2 \frac{\tau}{\tau - 1} \max_{|z| = \tau} \left| f(\psi(z)) \right| \left ( \frac{1}{\tau} \right)^\xi ,
   $$
   with $\xi$ defined by (\ref{eq-xi-def}).
\end{theorem}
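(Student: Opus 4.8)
The plan is to feed a geometric decay estimate for the Faber coefficients of $f$ into the bound of Theorem~\ref{thm-decay-faber-coeff}. That theorem already gives $|(f(A))_{k,\ell}| \le 2\sum_{j=\xi}^\infty |f_j|$, where the $f_j$ are the coefficients of the Faber expansion $f = \sum_j f_j \Phi_j$ for $E$; hence the entire problem reduces to showing that $|f_j|$ decays like $\tau^{-j}$ and then summing a geometric series. The hypotheses on $G_\tau$ are precisely tailored to produce such a bound.

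First I would recall the classical integral representation of the Faber coefficients (see \cite{suetin}). Because $\psi$ maps the circle $\{|w| = r\}$ onto the level curve $\{w : |\phi(w)| = r\}$ for each $r > 1$, and $f$ is analytic on $G_\tau$, for every $1 < r < \tau$ one has
$$ f_j = \frac{1}{2\pi \mathrm{i}} \oint_{|w|=r} \frac{f(\psi(w))}{w^{j+1}}\,\mathrm{d}w . $$
Estimating this contour integral by the length of the circle times the maximum of the integrand gives at once $|f_j| \le r^{-j}\max_{|w|=r}|f(\psi(w))|$. Letting $r \uparrow \tau$ --- which is legitimate exactly because $f$ is assumed bounded on $\Gamma_\tau$, so that the maximum of $|f\circ\psi|$ stays controlled up to the boundary --- yields
$$ |f_j| \le \left(\frac{1}{\tau}\right)^{j} \max_{|z|=\tau} |f(\psi(z))| =: M\,\tau^{-j} . $$

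Substituting this into Theorem~\ref{thm-decay-faber-coeff} and summing the geometric series completes the argument:
$$ |(f(A))_{k,\ell}| \le 2\sum_{j=\xi}^{\infty} |f_j| \le 2M \sum_{j=\xi}^{\infty} \tau^{-j} = 2M\,\frac{\tau}{\tau-1}\,\tau^{-\xi}, $$
which is the claimed inequality once $M = \max_{|z|=\tau}|f(\psi(z))|$ is reinserted. The main obstacle is the Faber-coefficient estimate, and within it the passage $r \uparrow \tau$: one must ensure the integral formula remains valid up to the boundary of the largest level set on which $f$ is analytic, and this is exactly where the boundedness of $f$ on $\Gamma_\tau$ is used. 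The remaining steps --- the contour estimate, the geometric summation, and the substitution --- are entirely routine.
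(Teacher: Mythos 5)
Your proposal follows essentially the same route as the paper's proof: invoke Theorem~\ref{thm-decay-faber-coeff}, bound the Faber coefficients via their contour-integral representation by $|f_j|\le \tau^{-j}\max_{|z|=\tau}|f(\psi(z))|$, and sum the geometric series. The only difference is that you integrate on circles $|w|=r<\tau$ and let $r\uparrow\tau$, whereas the paper writes the integral directly on $|z|=\tau$; your limiting argument is a minor (and slightly more careful) variant of the same step, not a different approach.
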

\begin{proof}
 By Properties \eqref{eq-power-A-banded} and \eqref{eq:beck:ineq} we get 
\begin{eqnarray*}
   |(f(A))_{k,\ell}| = \left |\sum_{j = 0}^{\infty} f_j \left( \Phi_j (A) \right)_{k,\ell} \right | 
		     = \left |\sum_{j = \xi}^{\infty} f_j \left( \Phi_j (A) \right)_{k,\ell} \right|
		     \leq 2 \sum_{j = \xi}^\infty |f_j|,
\end{eqnarray*}
 where the Faber coefficients $f_j$ are given by
(see, e.g., \cite[chapter III,Theorem 1]{suetin})
$$
 f_j = \frac{1}{2 \pi i} \int_{|z| = \tau} \frac{f(\psi(z))}{(z)^{j + 1}} \, \textrm{d}z.
$$
Since
$|f_j| \leq \frac{1}{(\tau)^{j}} \max_{|z| = \tau} \left| f(\psi(z)) \right|$
we get
\begin{eqnarray*}
      \left| \left ( f(A) \right )_{k,\ell} \right| 
	     &\leq&	
2 \, \max_{|z| = \tau} \left| f(\psi(z)) \right| \sum_{j = \xi}^\infty \left ( \frac{1}{\tau} \right)^j 
			      = 
2 \, \frac{\tau}{\tau - 1} \max_{|z| = \tau} \left| f(\psi(z)) \right| \left ( \frac{1}{\tau} \right)^\xi  . 
\end{eqnarray*} \qed
\end{proof}

The choice of $\tau$ in Theorem~\ref{thm-decay-level-set}, and thus the sharpness of the derived estimate,  
depends on the trade-off between the possible large size of $f$ on the given region, 
 and the exponential decay of $(1/\tau)^\xi$, and thus it produces an infinite family of bounds depending on the problem considered.
In our examples, 
we will apply Theorem~\ref{thm-decay-level-set} to the approximation of the functions: $f(z)=e^z$ 
and $f(z)=e^{-\sqrt{z}}$, with $z$ in a properly chosen domain.
\begin{cor}\label{cor_exp}
   Let $A \in \mathcal{B}_n(\beta,\gamma)$  with field of values contained in a closed set $E$ 
whose boundary is a horizontal ellipse with semi-axes  $a \geq b > 0$ and center $c = c_1 + i c_2 \in \CC$, $c_1, c_2 \in \RR$.
   Then 
   $$ 
   \left| \left ( e^{A} \right )_{k,\ell} \right|	\leq  
 		2 e^{c_1}  \frac{\xi + \sqrt{\xi^2 + a^2 - b^2}}{\xi + \sqrt{\xi^2 + a^2 - b^2} - (a+b)} 
			\left( \frac{a+b}{\xi}\frac{e^{q(\xi)}}{1 + \sqrt{1 + (a^2 - b^2)/\xi^2}} \right)^\xi, 
   $$ 
   for $\xi > b$, with
   $q(\xi) = 1 + \frac{a^2 - b^2}{\xi^2 + \xi\sqrt{\xi^2 + a^2 - b^2}}$
   and $\xi$ as in (\ref{eq-xi-def}).
\end{cor}

\noindent The proof can be found in the appendix.
Notice that for $\xi$ large enough, the decay rate is of the form $( (a+b)/(2\xi))^\xi$, that is, the decay is super-exponential.
Moreover, in the Hermitian case 
we can let $b \rightarrow 0$ in Corollary \ref{cor_exp}, thus obtaining a bound with a similar decay rate to the one derived in \cite{benzi_simoncini}.

The function $f(z)=e^{-\sqrt{z}}$ is not analytic on the whole complex plane.
This property has crucial effects in the approximation.
\begin{cor}\label{cor_expsqrt}
   Let $A \in \mathcal{B}_n(\beta,\gamma)$  with 
field of values contained in a closed set $E \subset \CC^+$,
whose boundary is a horizontal ellipse
   with semi-axes  $a \geq b > 0$ and center $c \in \CC$.
  Then,
$$ 
\left|  \left ( e^{-\sqrt{A}} \right )_{k,\ell} \right|	\leq  
		  2 q_2(a,b,c)  
		  \left (  \frac{a+b}{|c|} \, \frac{1}{|1 + \sqrt{1 - (a^2 - b^2)/c^2}|}  \right )^\xi,
$$
   with $\xi$ defined by (\ref{eq-xi-def}) and  
$$ 
q_2(a,b,c) = \frac{ \left| c + \sqrt{c^2 -(a^2 - b^2)}\right|} { \left|c + \sqrt{c^2 -(a^2 - b^2)} \right| - (a+b)} . 
$$
\end{cor}

\noindent The proof is given in the appendix.
Notice that when $c$ is not real (e.g., when $A$ is not real),
then the bound in Corollary \ref{cor_expsqrt} can be further improved since the ellipses considered in the proof are not the maximal one.

\begin{rem}
 For the sake of simplicity in the previous corollaries 
 horizontal ellipses were employed.
 However, more general convex sets $E$ may be considered.
 The previous bounds will change accordingly, since
 the optimal value for $\tau$ in Theorem~\ref{thm-decay-level-set}
 does depend on the parameters associated with $E$.
 For instance, for the exponential function and a \emph{vertical} ellipse, 
we can derive the same bound as in Corollary~\ref{cor_exp}
by letting $b > a$ (notice that this is different 
from exchanging the role of $a$ and $b$ in the bound).
The proof of this fact is non-trivial but technical, and it is not reported.
\end{rem}

\section{Residual bounds for exact and inexact Arnoldi methods}\label{subsec-arnoldi}
Given a matrix $A \in \mathbb{C}^{n\times n}$ and a vector $\mathbf{v} \in \mathbb{C}^n$, 
then for $m\ge 1$, the $m$th step of the Arnoldi algorithm determines an orthonormal basis $\{\mathbf{v}_1,\dots,\mathbf{v}_{m}\}$
for the Krylov subspace $\mathcal{K}_m(A,\mathbf{v})$, 
the subsequent orthonormal basis vector $\mathbf{v}_{m+1}$, 
an $m \times m$ upper Hessenberg matrix $H_m$, and a scalar $h_{m+1,m}$ such that
$$ 
A V_m = V_m H_m + h_{m+1,m} \mathbf{v}_{m+1} \mathbf{e}_m^T ,
$$
where $V_m = [\mathbf{v}_1,\dots,\mathbf{v}_{m}]$.
  Due to the orthogonality of the columns of $[V_m,v_{m+1}]$, 
the matrix $H_m$ is the projection and restriction
of $A$ onto $\mathcal{K}_m(A,\mathbf{v})$, that is $ H_m=V_m^* A V_m$.
Assuming, without loss of generality, that $\| \mathbf{v} \| = 1$,
the Arnoldi approximation to $f(A)\mathbf{v}$ is given as $V_m f(H_m) \mathbf{e}_1$; 
see, e.g., \cite[chapter 13]{higham}. 
The quantity 
$$|{\mathbf e}_m^T f(H_m){\mathbf e}_1|$$ is commonly 
used to monitor the accuracy of the approximation $\|f(A){\mathbf v} - V_m f(H_m){\mathbf e}_1\|$.
Notice that $|{\mathbf e}_m^T f(H_m){\mathbf e}_1| = |(f(H_m))_{m,1}|$, the last entry of the
first column of $f(H_m)$. In the case of the
exponential, $e^{-tA}{\mathbf v}$,  the quantity 
$$r_m(t) = | h_{m+1,m} {\mathbf e}_m^T e^{-tH_m}{\mathbf e}_1|$$
can be interpreted as the ``residual'' norm of an associated differential
equation, see \cite{Botchev.Grimm.Hochbruck.13} and references therein; this is true also for other
functions, see, e.g., \cite[section 6]{Druskin.Knizhnerman.95}. 
Indeed, assume that ${\mathbf y}(t)=f(tA){\mathbf v}$
is the solution to the differential equation $y^{(d)} = A y$ for some $d$th derivative,
 $d\in{\mathbb N}$ and
 specified initial conditions for $t=0$. Let
 ${\mathbf y}_m(t) = V_m f(tH_m){\mathbf e}_1 =: V_m \widehat {\mathbf y}_m(t)$. The vector $\widehat {\mathbf y}_m(t)$
is the solution to the projected equation $\widehat {\mathbf y}_m^{(d)} = H_m \widehat {\mathbf y}_m$ with initial
condition $\widehat {\mathbf y}_m(0)={\mathbf e}_1$.
The differential equation residual ${\mathbf r}_m = A {\mathbf y}_m - {\mathbf y}_m^{(d)}$ can be used to monitor the accuracy of the approximate solution. 
Indeed, using the definition of ${\mathbf y}_m$ and the Arnoldi relation, we get
\begin{eqnarray*}
{\mathbf r}_m(t) &=& A {\mathbf y}_m - {\mathbf y}_m^{(d)} = A V_m f(tH_m){\mathbf e}_1 - {\mathbf y}_m^{(d)} \\
&=&
V_m H_m f(tH_m){\mathbf e}_1 - V_m (f(tH_m))^{(d)} {\mathbf e}_1 + 
{\mathbf v}_{m+1} h_{m+1,m} {\mathbf e}_m^Tf(tH_m){\mathbf e}_1 \\
&=& V_m ( H_m \widehat{\mathbf y}_m -  \widehat{\mathbf y}_m^{(d)} )  + {\mathbf v}_{m+1} h_{m+1,m} {\mathbf e}_m^Tf(tH_m){\mathbf e}_1  \\
&=&
{\mathbf v}_{m+1} h_{m+1,m} {\mathbf e}_m^Tf(tH_m){\mathbf e}_1.
\end{eqnarray*}
Therefore $ r_m(t) = \| {\mathbf r}_m(t)\|$.

Without loss of generality in the following we consider $t=1$.
Hence, for simplicity, we will denote $ r_m = r_m(1)$, and $ {\mathbf r}_m = {\mathbf r}_m(1)$.
We remark that the property $H_m=V_m^* AV_m$ ensures that the field of values
of $H_m$ is contained in that of $A$, so that our theory can be applied
using $A$ as reference matrix to individuate the spectral region of interest.
Let $a$, $b$ be the semi-axes and $c=c_1+i c_2$ the center of an elliptical region $E$ containing the field of values of $A$
  {and $\xi = m - 1$.} From Corollary~\ref{cor_exp} for $m >  b + 1$ we get the inequality
  \begin{equation}\label{eq-arnoldi-prebound}
    |r_m| \leq  h_{m+1,m}
			2 e^{-c_1} p(m)  
			\left( \frac{e^{q(m-1)}(a+b)}{m-1 + \sqrt{(m-1)^2 + (a^2 - b^2)}} \right)^{m-1},
  \end{equation}
   with 
   $$q(m-1) = 1 + \frac{(a^2 - b^2)}{(m-1)^2 + (m - 1)\sqrt{(m-1)^2 + (a^2 - b^2)}}$$
   and 
   $$p(m) = \frac{m-1 + \sqrt{(m-1)^2 + (a^2 - b^2)}}{m-1 + \sqrt{(m-1)^2 + (a^2 - b^2)} - (a+b)}.$$
  In \cite{wang_thesis,wang_ye_preprint} a similar bound is proposed, where however a continuum $E$ 
  with rectangular shape is considered, instead of the elliptical one we take in Corollary~\ref{cor_exp}.
  Experiments suggest that the sharpness of these bounds depends on which set $E$ 
  better approximates the matrix field of values.

\begin{figure}[tbp]
\centering
    \includegraphics[width = 0.49\textwidth]{./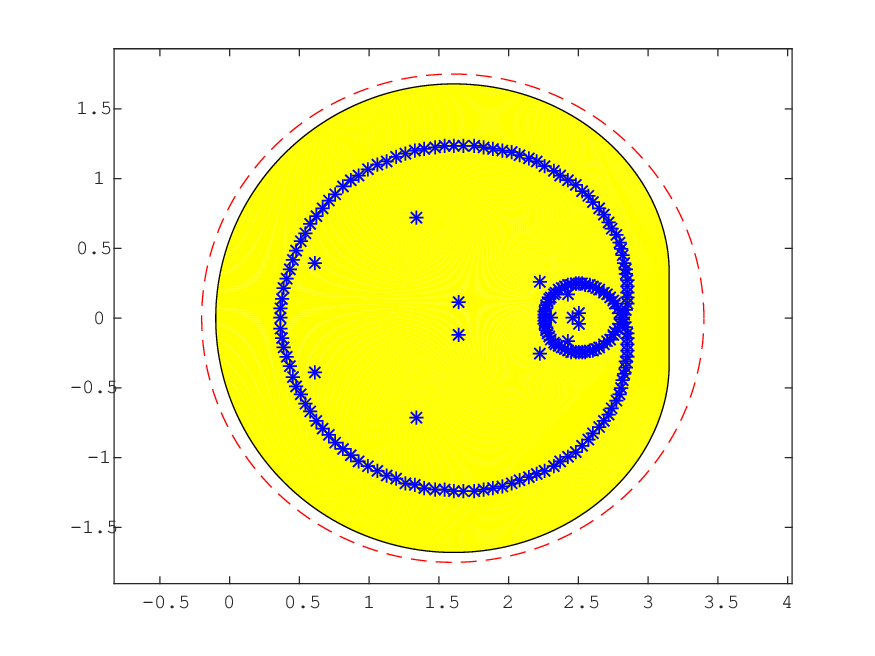}
    \includegraphics[width = 0.49\textwidth]{./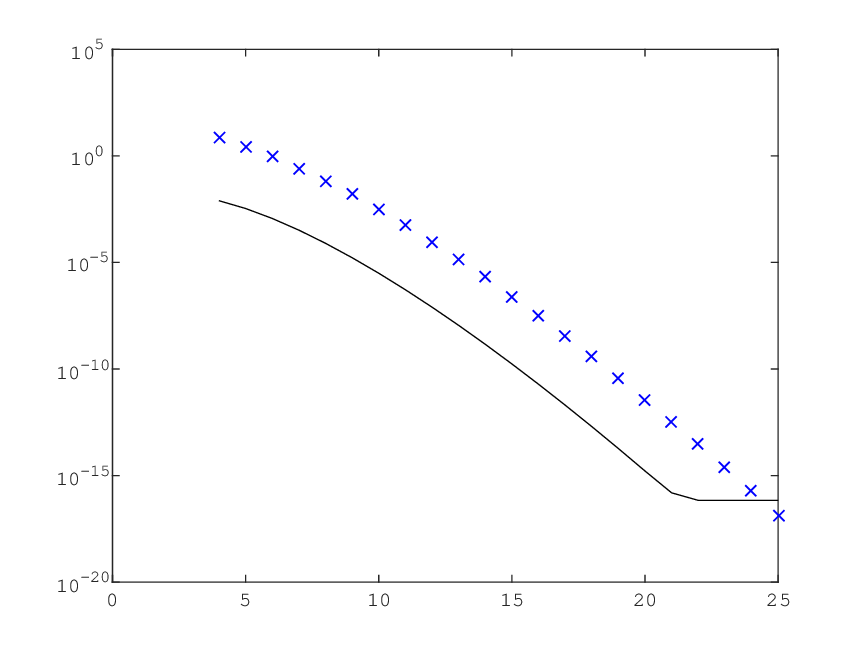}
    \includegraphics[width = 0.49\textwidth]{./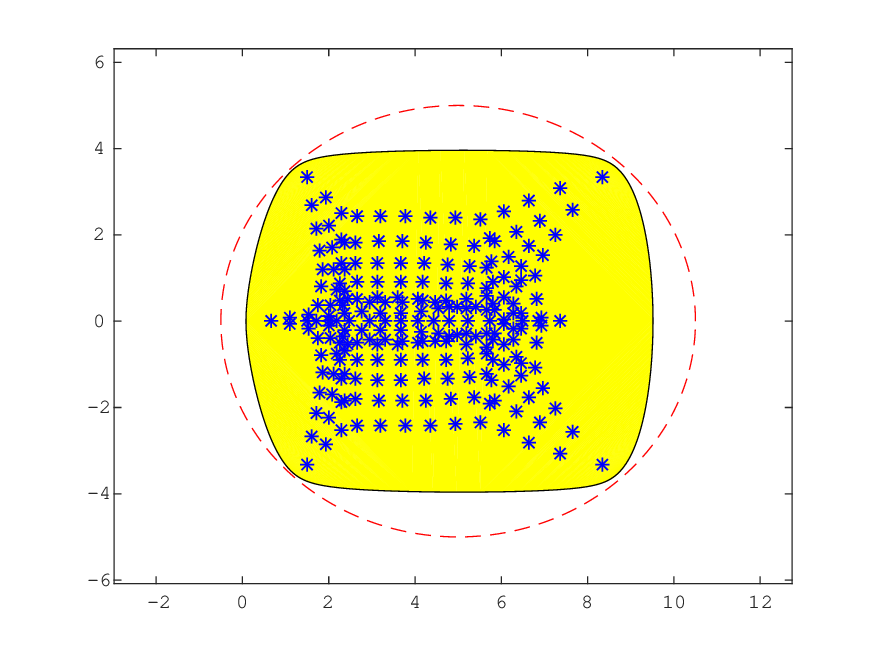}
    \includegraphics[width = 0.49\textwidth]{./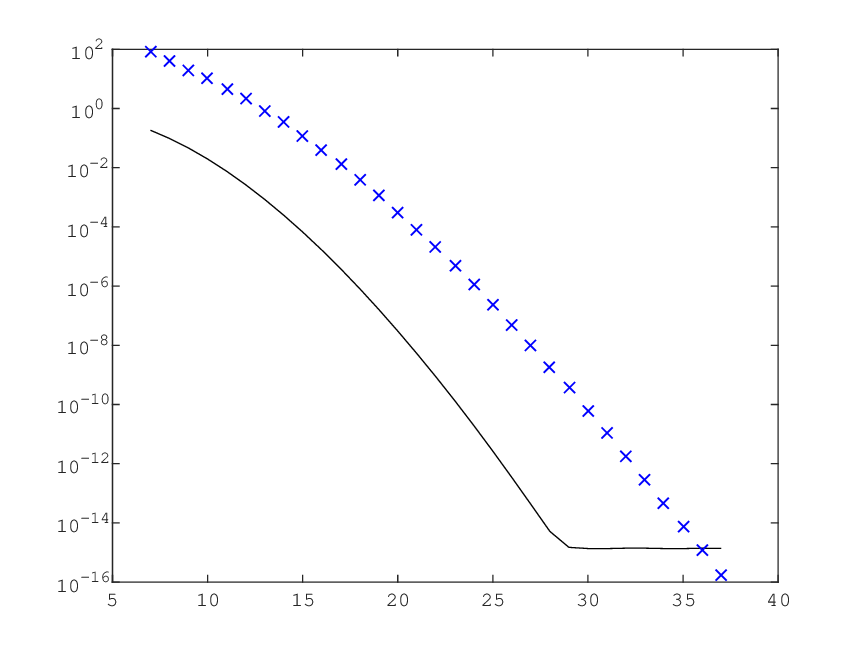}
\caption{Example \ref{ex-arnoldi}. Approximation of $e^{-A}{\mathbf{v}}$, 
with $\mathbf{v} = (1,\dots,1)^T / \sqrt{n}$.
Top: $A =\textrm{Toeplitz}(-1, 1, \underline{2}, 0.1) \in {\cal B}_{200}(1,2)$.
Bottom: matrix {\tt pde225}. 
Left: $W(A)$ (yellow area), 
eigenvalues of $A$ (blue crosses), 
and enclosing ellipse $E$ (red dashed line). 
Right: residual norm as the Arnoldi iteration proceeds in the approximation
(black solid line),
and residual bound in (\ref{eq-arnoldi-prebound}) (blue crosses).
\label{fig-sec3-arnoldi}
}
\end{figure}
  
\begin{exm}\label{ex-arnoldi}
  Figure \ref{fig-sec3-arnoldi} shows the behavior of the bound in (\ref{eq-arnoldi-prebound}) 
  for the {residual} of the Arnoldi approximation of $e^{-A}\mathbf{v}$ with $\mathbf{v}=(1,\dots,1)^T / \sqrt{n}$.
  The top plots refer to $A  \in {\cal B}_{200}(1,2)$ with Toeplitz structure, $A =\textrm{Toeplitz}(-1, 1, \underline{2}, 0.1)$,
  where the underlined element is on the diagonal, while the previous (resp. subsequent) values denote the lower (resp. upper) diagonal entries.
  The bottom plots refer to the matrix pde225 of the Matrix Market repository \cite{matrixmarket}.
  The left figure shows the field of values of the matrix $A$ (yellow area), 
  its eigenvalues (``$\times$''), and the horizontal ellipse used in the bound (red dashed line).
  On the right we plot the residual associated with the Arnoldi approximation as the iteration
proceeds (black solid line), and the corresponding values of the bound (blue crosses).
  Matrix exponentials were compute by the \verb expm  Matlab function.
\end{exm}	

\vskip 0.1in \sloppy
In an inexact Arnoldi procedure $A$ is not known exactly. This may be due for instance to the
fact that $A$ is only implicitly available via functional operations with a vector, which can be approximated
at some accuracy. To proceed with our analysis we can formalize this inexactness at each iteration $k$ as
 \begin{eqnarray}\label{eqn:inexact}
\widetilde {\mathbf v}_{k+1} = A {\mathbf v}_k + {\mathbf w}_k \approx A {\mathbf v}_k.
\end{eqnarray}
Typically, some form
of accuracy criterion is implemented, so that $\|{\mathbf w}_k\| < \epsilon$ for some $\epsilon$. It may be
that a different value of this tolerance is used at each iteration $k$, so that $\epsilon=\epsilon_k$.  
The new vector
$\widetilde {\mathbf v}_{k+1}$ is then orthonormalized with respect to the previous basis vectors to
obtain ${\mathbf v}_{k+1}$. In compact form, the original Arnoldi relation becomes
$$
(A+{\cal E}_m) V_m = V_m H_m + h_{m+1,m} \mathbf{v}_{m+1} \mathbf{e}_m^T, \quad 
{\cal E}_m = [{\mathbf w}_1, \ldots, {\mathbf w}_m] V_m^* .
$$
Here $H_m$ is again upper Hessenberg; however, $H_m = V_m^* (A+{\cal E}_m) V_m$.
Moreover, ${\cal E}_m$ changes as $m$ grows.
The differential equation residual can be defined in the same way as for the
exact case, ${\mathbf r}_m = A {\mathbf y}_m - {\mathbf y}_m^{(d)}$; however the 
inexact Arnoldi relation should be considered to proceed further. Indeed,
\begin{eqnarray*}
{\mathbf r}_m &=& A {\mathbf y}_m - {\mathbf y}_m^{(d)} = A V_m f(H_m){\mathbf e}_1 - {\mathbf y}_m^{(d)} \\
&=&- {\cal E}_m V_m f(H_m){\mathbf e}_1 + V_m H_m f(H_m){\mathbf e}_1 - {\mathbf y}_m^{(d)} + 
{\mathbf v}_{m+1} h_{m+1,m} {\mathbf e}_m^Tf(H_m){\mathbf e}_1  \\
&=&
- [{\mathbf w}_1, \ldots, {\mathbf w}_m]  f(H_m){\mathbf e}_1 + 
{\mathbf v}_{m+1} h_{m+1,m} {\mathbf e}_m^Tf(H_m){\mathbf e}_1.
\end{eqnarray*}
Note that $\|{\mathbf r}_m\|$ is not available, since $A$ cannot be applied exactly.
However, with the previous notation we can write
$ \|{\mathbf r}_m\|   \le |\|{\mathbf r}_m\| - r_m | + r_m$ where
$$
|\|{\mathbf r}_m\| - r_m | \le \|[{\mathbf w}_1, \ldots, {\mathbf w}_m]  f(H_m){\mathbf e}_1\|;
$$
we remark that in this case $r_m \neq \| {\mathbf r}_m \|$.
Therefore, checking the available $r_m$ provides a good measure of the accuracy in
the function estimation as long as $\|[{\mathbf w}_1, \ldots, {\mathbf w}_m]  f(H_m){\mathbf e}_1\|$
is smaller than the requested tolerance for the final accuracy of the computation.

Clearly, $\|[{\mathbf w}_1, \ldots, {\mathbf w}_m]  f(H_m){\mathbf e}_1\| \le 
\|[{\mathbf w}_1, \ldots, {\mathbf w}_m]\|\, \|f(H_m){\mathbf e}_1\|$ so that the criterion
$\|{\mathbf w}_k\| < \epsilon$ can be used to monitor the quality of the approximation to 
$f(A){\mathbf v}$ by means of $r_m$.
However, a less stringent criterion can be devised.
Following similar discussions in \cite{Simoncini.Szyld.03},\cite{Simoncini.05}, we write
\begin{equation*}
 \|[{\mathbf w}_1, \ldots, {\mathbf w}_m]  f(H_m){\mathbf e}_1\| =
\|\sum_{j=1}^m {\mathbf w}_j {\mathbf e}_j^T f(H_m){\mathbf e}_1 \| \le \sum_{j=1}^m \|{\mathbf w}_j\|\,
|{\mathbf e}_j^T f(H_m){\mathbf e}_1|,
\end{equation*}
where we assume
that $\|{\mathbf w}_j\| < \epsilon_j$, that is the accuracy in the computation with $A$
varies with $j$.
Hence, $\|[{\mathbf w}_1, \ldots, {\mathbf w}_m]  f(H_m){\mathbf e}_1\|$ is small
when either $\|{\mathbf w}_j\|$ or $|{\mathbf e}_j^T f(H_m){\mathbf e}_1|$ is small, and not
necessarily both. 
 By exploiting the exponential decay of the entries of $f(H_m){\mathbf e}_1$, we can
infer that $\|{\mathbf w}_j\|$ is in fact allowed to grow with $j$, according with the
exponential decay of the corresponding entries of $f(H_m){\mathbf e}_1$, without
affecting the overall accuracy. A priori bounds on
$|{\mathbf e}_j^T f(H_m){\mathbf e}_1|$ can be used to select $\epsilon_j$ when
estimating $A {\mathbf v}_j$. This relaxed strategy can significantly decrease the
computational cost of matrix function evaluations whenever applying $A$ accurately is expensive.
However, notice that the field of values of $H_m$ is contained in the field of values of $A +{\cal E}_m$.
Hence if $W(A)$ is contained in an ellipse {$\partial E$} of semi-axes $a,b$ and center $c$ then
$W(A + {\cal E}_m) \subset W(A) + W({\cal E}_m)$.
Since 
$$
\sup_{\|z\|=1} | z^*{\cal E}_m z | \leq \sup_{\|z\|=1} \| {\cal E}_m z \| 
\leq \sqrt{\sum_{j=1}^m \|{\mathbf w}_j\|^2} \leq \sqrt{\sum_{j=1}^m \epsilon_j^2} =: \epsilon^{(m)},
$$
the set $W({\cal E}_m)$ is contained in the disk centered at the origin
and radius $\epsilon^{(m)}$.  Therefore, $W(A) + W({\cal E}_m)$ is contained 
in any set whose boundary has minimal distance from $\partial E$ not smaller than $\epsilon^{(m)}$.
One such set is contained in the ellipse $\partial E_m$
with semi-axes $a(1 + \epsilon^{(m)}/b)$, $b + \epsilon^{(m)}$ and center $c$. Indeed,
$z\in \partial E_m$ can be parameterized as
$$
z= \left(1 + \frac{\epsilon^{(m)}}{b}\right)\frac{\rho}{2}\left(R e^{i\theta} +
 \frac{1}{Re^{i\theta}}\right) + c, \quad 0\leq \theta \leq 2\pi,
$$
with $\rho = \sqrt{a^2 - b^2}$, $R = (a + b)/\rho$.
The distance between $z$ and the ellipse $\partial E$ is
$$
\left| \frac{\epsilon^{(m)}}{b}\frac{\rho}{2}\left(R e^{i\theta} + \frac{1}{Re^{i\theta}}\right) \right|
\geq \left| \frac{\epsilon^{(m)}}{b}\frac{\rho}{2}\left(R - \frac{1}{R}\right) \right|
= \epsilon^{(m)}.
$$ 
Hence we can provide the following strategy for the choice of the accuracy in inexact Arnoldi.
\begin{theorem}
Let ${\mathbf r}_j = A {\mathbf y}_j - {\mathbf y}_j^{(d)}$ be the residual obtained by the $j$th step of the inexact Arnoldi algorithm, 
with accuracy $ \|w_j\| \leq \bar \epsilon_j$, for $j=1,2,\dots$.
Consider an ellipse with semiaxes $a \geq b > 0$ and center $c$ containing $W(A)$.
Moreover, let us fix a tolerance $tol>0$, a  maximum number of iterations $m$ and a value $\epsilon^{(m)} > 0$.
Then, the following choice for the accuracies
\begin{equation}\label{eq.choice.accuracy}
  \bar \epsilon_j = \left \{
   \begin{array}{ll}
    \dfrac{tol}{m}\max\left(1,\dfrac{1}{s_j}\right),       & \textrm{ if } \quad \dfrac{tol}{m  \, s_j} < \dfrac{\sqrt{ (\epsilon^{(m)})^2 - \sum_{k=1}^{j-1}\bar \epsilon_k^2}} {m - j + 1}  \\ \\
     \dfrac{\sqrt{(\epsilon^{(m)})^2 - \sum_{k=1}^{j-1} \bar \epsilon_k^2 }} {m - j + 1},    & \textrm{ otherwise } 
   \end{array}
   \right.
\end{equation}
for $j=1,\dots,m$, gives 
$$\sqrt{\sum_{j=1}^m \bar \epsilon_j^2} \leq \epsilon^{(m)}, \quad \textrm{ and } \quad |\|{\mathbf r}_m\| - r_m | \le tol,$$
where $s_j$ is the upper bound for $|{\mathbf e}_j^T f(H_m){\mathbf e}_1|$  from Theorem \ref{thm-decay-level-set}
with $f$ the function associated with the solution of the differential equation $y^{(d)} = A y$,
and $E$ the ellipse with semiaxes $a(1 + \epsilon^{(m)}/b)$, $b + \epsilon^{(m)}$ and center $c$.
The bound can be specialized for the functions $f(z)=e^z$ and $f(z)=e^{-\sqrt{z}}$ 
using respectively corollaries \ref{cor_exp} and \ref{cor_expsqrt}.
\end{theorem}

\begin{figure}[tbp]
\centering
    \includegraphics[width = 0.49\textwidth]{./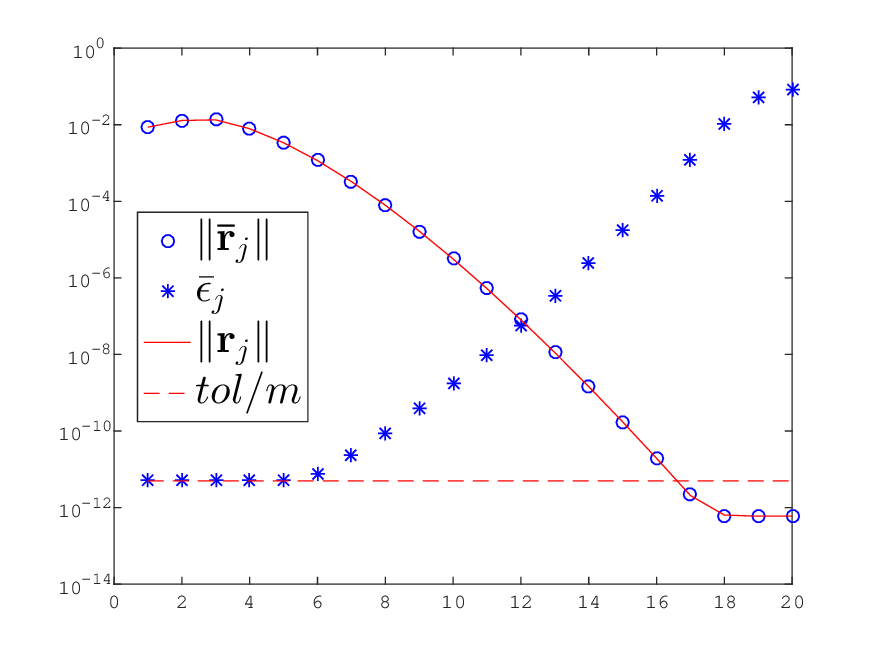}
    \includegraphics[width = 0.49\textwidth]{./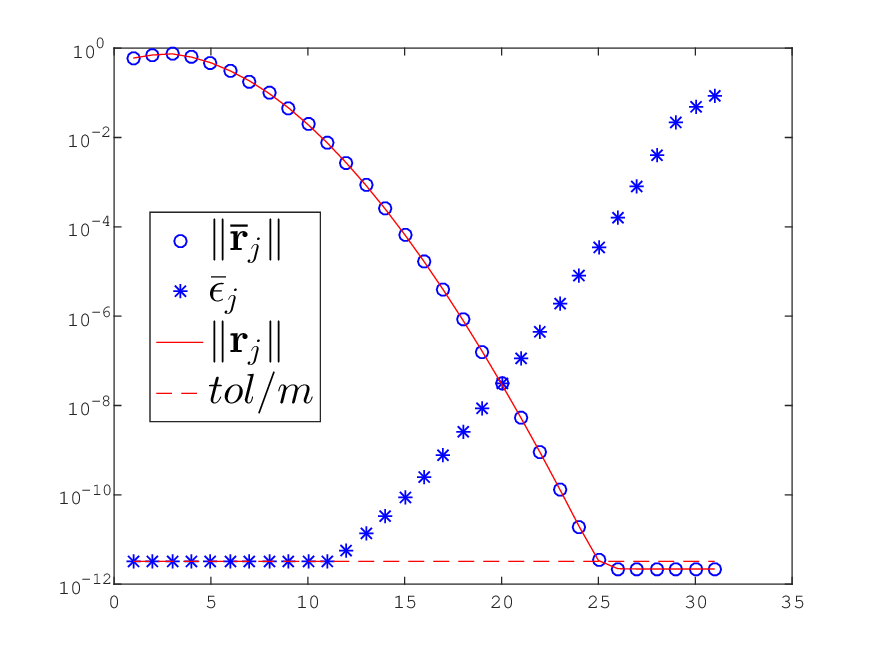}
\caption{Example \ref{ex-inex}, approximation of $e^{-A} \mathbf{v}$
with $\mathbf{v} = (1,\dots,1)^T / \sqrt{n}$. 
Residual norm $\|\mathbf{r}_j\|$ with constant accuracy $\epsilon_j = tol/m$,
and residual norm $\|\mathbf{\bar r}_j\|$ with  $\epsilon_j = \bar \epsilon_j$ by (\ref{eq.choice.accuracy})
as the inexact Arnoldi method proceeds.
Left: For $A =\textrm{Toeplitz}(1,\underline{2}, 0.1, -1) \in {\cal B}_{200}(1,1)$.
Right: For matrix {\tt pde225} from the Matrix Market repository \cite{matrixmarket}.
\label{fig.inexact.arnoldi}
}
\end{figure}

\begin{exm}\label{ex-inex}
We consider the inexact Arnoldi procedure for the approximation of $\exp(-A)\mathbf{v}$,
so that  the norm of the differential equation residual 
is lower than a tolerance $tol$.  
The inexact matrix-vector product was implemented as in (\ref{eqn:inexact}),
where ${\bf w}_j$ is a random vector of norm $\epsilon_j$. 
Figure \ref{fig.inexact.arnoldi} reports our results for $\mathbf{v}=(1,\dots,1)^T / \sqrt{n}$
and the same matrices as in Example \ref{ex-arnoldi}: 
$A =\textrm{Toeplitz}(1,\underline{2}, 0.1, -1) \in {\cal B}_{200}(2,1)$ (left), 
and the matrix {\tt pde225} from the Matrix Market repository \cite{matrixmarket} (right).
For constant accuracy $\epsilon_j = tol/m$ (dashed line), the solid
line shows the residual norm $||\mathbf{r}_j||$ as the iteration $j$ proceeds. 
For variable accuracy
$\epsilon_j = \bar \epsilon_j$ obtained from (\ref{eq.choice.accuracy}) (stars)
the circles display the residual norm $\|\mathbf{\bar r}_j\|$.
We set $tol = 10^{-10}$ and $\epsilon^{(m)} = 10^{-1}$.
The maximum approximation space dimension $m$ was chosen as
 the smallest value for which the bound (\ref{eq-arnoldi-prebound}) is lower than $tol$,
respectively $m=20$ and $m=31$.
The fields of values of the matrices can be obtained starting from those reported
   in the left plots of Figure~\ref{fig-sec3-arnoldi}, where however now
the original semi-axes $a,b$ of the elliptical sets considered 
   for the computation of $s_j$
   are increased by $\epsilon^{(m)}/b$ and $\epsilon^{(m)}$ respectively.
The plots show visually overlapping residual norm histories for the two choices of $\epsilon_j$, illustrating
that in practice no loss of information takes place during the relaxing strategy.
\end{exm}

\begin{figure}[tbp]
\centering
    \includegraphics[width = 0.49\textwidth]{./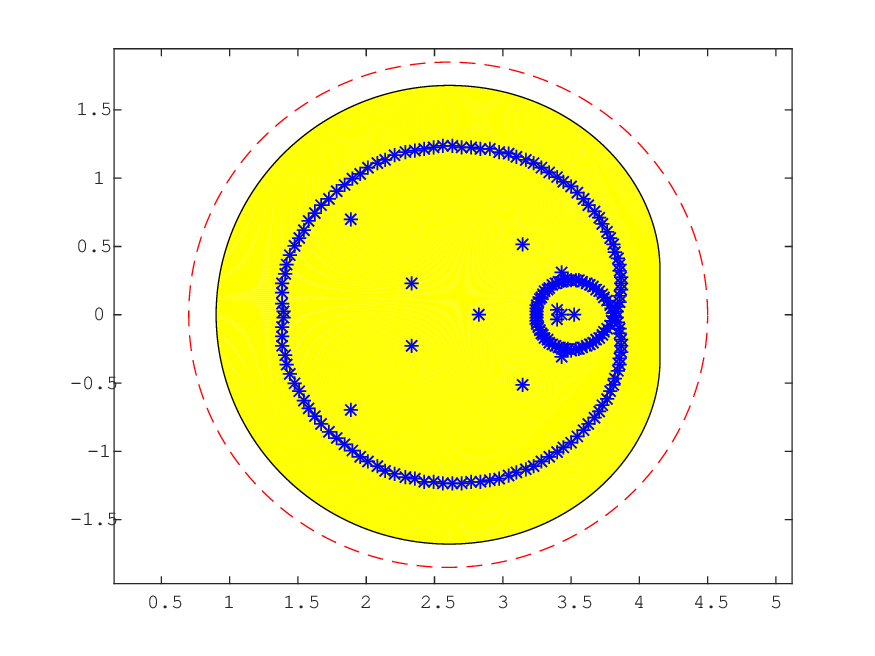}
    \includegraphics[width = 0.49\textwidth]{./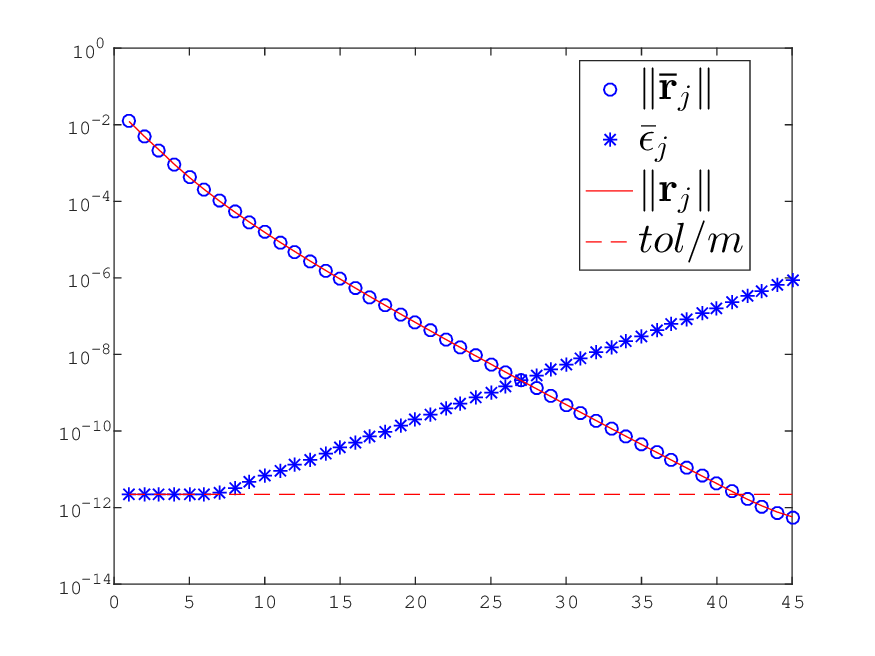}
\caption{Example \ref{ex.inex.expsqrt}.  Approximation of $e^{-A} \mathbf{v}$ 
with  $A=\textrm{Toeplitz}(-1,1,\underline{3},0.1) \in \mathcal{B}_{200}(1,2)$ and $\mathbf{v} = (1,\dots,1)^T / \sqrt{n}$.
Left: spectral information. Right:
Residual norm $||\mathbf{r}_j||$ with constant accuracy $\epsilon_j = tol/m$,
and residual norm $\|\mathbf{\bar r}_j\|$ with  $\epsilon_j = \bar \epsilon_j$ by (\ref{eq.choice.accuracy})
as the inexact Arnoldi method proceeds.
\label{fig.inexact.arnoldi.expsqrt}
}
\end{figure}

Consider the differential equation $ y^{(2)}= A y$, with $y(0) = \mathbf{v}$.
Its solution can be expressed as $y(t) = \exp(-t\sqrt{A}) \mathbf{v}$, and our results can be
applied to this case as well.
This time the upper bound $s_j$ for $|\mathbf{e}_m^T f(H_m) \mathbf{e}_1|$
is obtained from Corollary~\ref{cor_expsqrt}.
\begin{exm}\label{ex.inex.expsqrt}
For the same experimental
setting as in Example~\ref{ex-inex} we consider approximating $\exp(-\sqrt{A})\mathbf{v}$, 
for $A=\textrm{Toeplitz}(-1,1,\underline{3},0.1) \in \mathcal{B}_{200}(1,2)$,
$\mathbf{v}=(1,\dots,1)^T / \sqrt{200}$ and $m=35$.
Figure~\ref{fig.inexact.arnoldi.expsqrt} reports on our findings, with the same description
as for the previous example.
Here $s_j$ in (\ref{eq.choice.accuracy}) is obtained from Corollary~\ref{cor_expsqrt}, and it is
used to relax the accuracy $\epsilon_j$. Similar considerations apply.
\end{exm}

\section{Conclusions}\label{sec-conclusion}
Exploiting the described bounds for the off-diagonal decay pattern of functions of non-Hermitian banded matrices, 
we have derived bounds for the residual associated with the matrix function approximation given by the Arnoldi algorithm. 
As expected, the described bounds are influenced by the dependence between the predicted decay rate and the shape and dimension of the set enclosing the field of values of $A$. 
The closer $E$ is to the field of values, the sharper the bound.
We have also used the described decay estimates to define a strategy for setting the accuracy 
of the inexactness of matrix-vector products in Arnoldi approximations of matrix functions applied to a vector. 
Similar results can be obtained for other Krylov-type approximations whose projection and restriction matrix $H_m$ has a semi-banded structure. 
This is the case for instance of the Extended Krylov subspace approximation; see, e.g., \cite{KnizhnermanSimoncini2010} and references therein.

\begin{acknowledgements}
We are indebted with Leonid Knizhnerman for a careful reading of a earlier version of this manuscript, 
and for his many insightful remarks which led to great improvements of our results. 
We also thank Michele Benzi for several suggestions.
\end{acknowledgements}

\appendix

\section{Technical proofs}

\subsection*{Proof of corollary \ref{cor_exp}}
 Let $\rho = \sqrt{a^2 - b^2}$ be the distance between the foci and the center of the ellipse (i.e., the boundary of $E$), and let $R = (a + b)/\rho$.
 {Then a conformal map for $E$ is
 \begin{equation}\label{eq:phi:ell}
    \phi(w) = \frac{w - c - \sqrt{(w-c)^2 - \rho^2 }}{\rho R},
 \end{equation}
 and its inverse is}
 \begin{equation}\label{eqn-psi-ellipse}
    \psi(z) = \frac{\rho}{2}\left ( Rz + \frac{1}{Rz} \right) + c, 
 \end{equation}
 see, e.g., \cite[chapter II, Example 3]{suetin}.
 Notice that 
 $$ 
\max_{|z| = \tau} |e^{\psi(z)}| = \max_{|z| = \tau} e^{\Re(\psi(z))} = 
e^{\frac{\rho}{2}\left ( R\tau + \frac{1}{R\tau} \right) + c_1}.
$$
 Hence by Theorem \ref{thm-decay-level-set} we get
    $$ \left| \left ( e^{A} \right )_{k,\ell} \right|	\leq  
			2 \frac{\tau}{\tau - 1} e^{c_1} e^{\frac{\rho}{2}\left ( R\tau + \frac{1}{R\tau} \right)} \left( \frac{1}{\tau} \right)^\xi.$$
The optimal value of $\tau > 1$ that minimizes
 $ e^{\frac{\rho}{2}\left ( R\tau + \frac{1}{R\tau} \right)} \left( \frac{1}{\tau} \right)^\xi$ is
$$\tau = \frac{\xi + \sqrt{\xi^2 + \rho^2 }}{\rho R}.$$
Moreover the condition $\tau >1$ is satisfied if and only if $\xi > \frac{\rho}{2}\left(R - \frac{1}{R} \right) = b$. 
Finally, noticing that
$$ \psi\left(\frac{\xi + \sqrt{\xi^2 + \rho^2 }}{\rho R}\right) - c_1 = \frac{1}{2}\left (\xi + \sqrt{\xi^2 + \rho^2 } + \frac{\rho^2}{\xi + \sqrt{\xi^2 + \rho^2 }} \right) = \xi q(\xi),  $$
and collecting $\xi$  the proof is completed. \qed


\subsection*{Proof of corollary \ref{cor_expsqrt}}

 The function $f(z) = \exp(- \sqrt{z})$ is analytic on $\mathbb{C} \setminus (-\infty,0)$.
 Since we consider the principal square root, then $\Re(\sqrt{z})\geq0$,
 and so $$|\exp(- \sqrt{z})| = \exp(-\Re(\sqrt{z})) \leq 1.$$
 Hence, by Theorem \ref{thm-decay-level-set} we can determine $\tau$ for which
$$ \left| \left ( e^{-\sqrt{A}} \right )_{k,\ell} \right|	\leq  
			 2 \frac{\tau}{\tau - 1} \left ( \frac{1}{\tau} \right)^\xi.$$
For every $\varepsilon > 0$ close enough to zero, we set the parameter
$$\tau_\varepsilon = |\phi(\varepsilon)| = \left |\frac{c - \varepsilon + \sqrt{(c-\varepsilon)^2 -\rho^2}}{\rho R} \right|, $$
with $\phi(w)$ as in \eqref{eq:phi:ell} and $\psi(z)$ its inverse \eqref{eqn-psi-ellipse}.
Then the ellipse $\{ \psi(z), \, |z| = \tau_\varepsilon  \}$ is contained in $\mathbb{C} \setminus (-\infty,0]$.
Letting $\varepsilon \rightarrow 0$ concludes the proof. \qed

\bibliographystyle{spmpsci}      
\bibliography{Manuscript}   


\end{document}